\theoremstyle{plain}
\newtheorem{thm}{Theorem}[section]
\newtheorem{lemma}[thm]{Lemma}
\newtheorem{prop}[thm]{Proposition}
\newtheorem{conj}[thm]{Conjecture}
\theoremstyle{remark}
\newcommand{\defi}[1]{\emph{\color{red!50!black}#1}}
\let\eps=\varepsilon
\let\theta=\vartheta
\let\phi=\varphi
\def\dlin{d^{lin}}
\def\davg{d^{avg}}
\def\deg{d}
\def\triangle{\tikz{
    \centering
    \begin{scope}[scale=0.2]
    \node[draw,shape=circle,fill=black,inner sep=0pt,minimum size=2pt] (u) at (-0.4,-0.2) {};
    \node[draw,shape=circle,fill=black,inner sep=0pt,minimum size=2pt] (v) at (0.4,-0.2) {};
    \node[draw,shape=circle,fill=black,inner sep=0pt,minimum size=2pt] (x) at (0,0.5) {};
    \draw[thick] (u) -- (v);
    \draw[thick] (v) -- (x);
    \draw[thick] (x) -- (u);
    \end{scope}
}}
\def\cherry{\tikz{
    \centering
    \begin{scope}[scale=0.2]
    \node[draw,shape=circle,fill=black,inner sep=0pt,minimum size=2pt] (u) at (-0.4,-0.2) {};
    \node[draw,shape=circle,fill=black,inner sep=0pt,minimum size=2pt] (v) at (0.4,-0.2) {};
    \node[draw,shape=circle,fill=black,inner sep=0pt,minimum size=2pt] (x) at (0,0.5) {};
    \draw[thick] (v) -- (x);
    \draw[thick] (x) -- (u);
    \end{scope}
}}
\begin{document}

\title{The Brown-Erd\H{o}s-S\'os conjecture in dense triple systems}

\author[G. Santos]{Giovanne Santos}
\address{Departamento de Ingeniería Matemática, Universidad de Chile, Santiago, Chile}
\email{gsantos@dim.uchile.cl}

\author[M. Tyomkyn]{Mykhaylo Tyomkyn}
\address{Department of Applied Mathematics (KAM MFF), Charles University, Prague, Czech Republic}
\email{tyomkyn@kam.mﬀ.cuni.cz}
\thanks{GS has been supported by ANID/Doctorado Nacional/21221049. MT has been supported by GA\v{C}R grant 25-17377S and ERC Synergy Grant DYNASNET 810115.}

\maketitle

\begin{abstract}
The famous Brown-Erd\H{o}s-S\'os conjecture from 1973 states, in an equivalent form, that for any fixed $\delta>0$ and integer $k\geq 3$ every sufficiently large linear $3$-uniform hypergraph of size $\delta n^2$ contains some $k$ edges spanning at most $k+3$ vertices. We prove it to hold for $\delta>4/5$, establishing the first bound of this kind. 
\end{abstract}

\section{Introduction}
\label{sec:intro}

One of the central problems in extremal hypergraph theory, the notoriously difficult Brown-Erd\H{o}s-S\'os conjecture (BESC, for short) from 1973~\cite{BES71} states that
for every $\delta > 0$
and $k \geq 3$ there exists an integer $n_0$ such that 
every $3$-uniform hypergraph on $n\geq n_0$ vertices with at
least $\delta n^{2}$ edges contains a $(k+3,k)$-configuration, i.e.~a set
of~$k$ edges containing in their union at most~${k + 3}$ vertices. 
Its first case $k=3$ was proved by Rusza and Szemerédi~\cite{RS76} in what
became known as the $(6,3)$-theorem --- an influential result in its own right, with far-reaching consequences
in extremal graph theory, additive combinatorics and graph property testing. Its proof featured one of the first applications of Szemerédi's
regularity lemma, and it is believed that a proof of further cases of the conjecture, let alone of the BESC in full, would likewise lead to important new insights. However, despite a lot of effort, the conjecture remains open for all~${k \geq 4}$. 

It is well-known (see e.g.~Claim 1 in~\cite{S15})
that the BESC reduces to the case of~\defi{linear} hypergraphs, i.e.~$3$-uniform
hypergraphs ($3$-graphs, for short) where any two edges share at most one
vertex. Given a linear $3$-graph~$\mathcal{H}$ with $n$ vertices and $m$
edges, define the \defi{linear density} of~$\mathcal{H}$
as~${\dlin(\mathcal{H})=3m/\binom{n}{2}}$.

\begin{conj}[BESC restated]
	\label{conj:besclinear}
	For every $k \geq 4$ and $0<\delta\leq 1$ there exists~$n_0 = n_0(k,\delta)$ such that every linear $3$-graph $\mathcal{H}$ with
  $n \geq n_0$ vertices and~${\dlin(\mathcal{H}) \geq \delta}$
  contains a~${(k+3,k)}$-configuration.
\end{conj}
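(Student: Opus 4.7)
The statement is the full Brown--Erd\H{o}s--S\'os conjecture, which is open for every $k\geq 4$; I can therefore only lay out a plan and pinpoint the obstruction that has blocked progress for over five decades. The natural strategy is induction on~$k$, with the Ruzsa--Szemer\'edi $(6,3)$-theorem (the case $k=3$) as the base and hypergraph regularity organising the inductive step.

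Fix $k\geq 4$ and assume the statement for $k-1$. Let $\mathcal{H}$ be linear with $\dlin(\mathcal{H})\geq \delta$ on $n$ vertices. Apply a linear $3$-graph regularity lemma (Frankl--R\"odl / Kohayakawa--R\"odl style) to partition $V(\mathcal{H})$ into boundedly many parts so that most triples are $\eps$-regular, and discard sparse, irregular and low-weight triples to obtain a sub-hypergraph $\mathcal{H}'\subseteq \mathcal{H}$ with $\dlin(\mathcal{H}')\geq \delta/2$ on $n-o(n)$ vertices, each edge of which lies in a dense regular triple of parts. Apply the inductive hypothesis to $\mathcal{H}'$ to locate a $(k+2,k-1)$-configuration $C=\{e_1,\dots,e_{k-1}\}$ on a vertex set $U$ with $|U|=k+2$. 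To upgrade $C$ to a $(k+3,k)$-configuration one would like to produce an edge $e_k\in E(\mathcal{H}')\setminus C$ with $|e_k\cap U|=2$, contributing a single new vertex; by linearity this forces the two $U$-vertices of $e_k$ to lie in two distinct $e_i$'s.

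The main obstacle is precisely this extension step. Regularity controls edge counts between unordered triples of parts but tells us essentially nothing about the codegrees of individual pairs $\{u,v\}\subset U$, whereas the existence of $e_k$ is an honest codegree condition on such a pair. In a regular but codegree-vanishing hypergraph --- for instance, a random-like one in which no two vertices share many common neighbours --- every candidate $C$ can simply fail to extend, so the counting step breaks down. To close this gap one would need either a density-increment lemma boosting $\dlin$ from $\delta$ to the range $>4/5$ handled by the main theorem of the present paper (no such lemma is currently known), or a structural classification of dense linear $3$-graphs far from Steiner triple systems, or a genuinely new technique beyond regularity. Accordingly, while the induction-plus-regularity skeleton above is what I would write down first, I would expect the serious work to be in supplying such additional structural input; without it, a purely regularity-and-counting argument for the inductive step appears unattainable, which is why the conjecture has resisted all direct attacks for~$k\geq 4$.
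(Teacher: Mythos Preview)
You are correct that this statement is the full Brown--Erd\H{o}s--S\'os conjecture and is open for every $k\geq 4$; the paper does not prove it either. It is stated in the paper as Conjecture~\ref{conj:besclinear}, and the paper's contribution is only the partial result Theorem~\ref{thm:main}, which establishes the conclusion under the much stronger hypothesis $\dlin(\mathcal{H})\geq 4/5+\eps$. So there is no ``paper's own proof'' of this statement to compare your proposal against.

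Your write-up is therefore appropriate in spirit: it does not pretend to a proof, it sketches the natural induction-plus-regularity skeleton, and it correctly isolates the extension/codegree obstruction that blocks the inductive step. One minor remark: the paper's approach to the $\delta>4/5$ range is not via regularity at all but via a direct counting argument on the bow-tie graph, so your suggestion of a density-increment to reach that range would feed into a rather different endgame than the regularity framework you outline. Otherwise your assessment of the state of the problem is accurate.
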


It is easy to see that the above statement holds for $\delta=1$, i.e.~in
complete (Steiner) triple systems -- a desired configuration is produced by a
simple greedy algorithm. Similarly, it is not hard to deduce for any
given $k$ the existence of $\delta=\delta(k)<1$ that guarantees
a $(k+3,k)$-configuration. However, it seems less straightforward to
prove the conjecture statement for a fixed $\delta<1$ and all $k$, and we were not able to find such a result in the literature. Our aim in this note is to close
this gap by showing that any $\delta>4/5$ would suffice. 
\begin{thm}
  \label{thm:main}
  For every $k \geq 4$ and $\eps > 0$ there exists
  $n_0 = n_0(k,\eps)$ such that any linear $3$-graph~$\mathcal{H}$ with
  $n \geq n_0$ vertices and~${\dlin(\mathcal{H}) \geq 4/5+\eps}$ contains
  a~${(k+3,k)}$-configuration.
\end{thm}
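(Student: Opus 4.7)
My plan is to construct the $(k+3,k)$-configuration by a greedy growth procedure, maintaining the invariant $|S|\le|E|+3$. Writing $u(S)$ for the number of pairs of $S$ not covered by any edge of $\mathcal{H}$, the density hypothesis is equivalent to the global bound $u(V(\mathcal{H}))\le(1/5-\varepsilon)\binom{n}{2}$. Call an $s$-set $S$ \emph{typical} if $u(S)\le(1/5-\varepsilon/2)\binom{s}{2}$; by averaging, the vast majority of $s$-sets are typical. I initialize with a typical $(5,2)$-seed $(S_{0},E_{0})$ consisting of two hyperedges of $\mathcal{H}$ sharing a vertex; double-counting $\sum_{v}\binom{d_{v}}{2}$ against the degree condition implied by $\dlin(\mathcal{H})\ge 4/5+\varepsilon$ yields $\Omega(n^{3})$ such seeds, so by Markov most of them land on typical $5$-sets.

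I then extend iteratively. At the $i$-th step I pick a hyperedge $\{x,y,v\}\in\mathcal{H}\setminus E_{i-1}$ with $\{x,y\}\subset S_{i-1}$, preferring $v\in S_{i-1}$ (an \emph{internal} step, keeping $S$ fixed) when such an edge exists and otherwise taking $v\notin S_{i-1}$ (an \emph{external} step, adding $v$ to $S$). The invariant $|S|\le|E|+3$ is preserved, so after at most $k-2$ steps I reach a $(k+3,k)$-configuration. The procedure is blocked only when every pair of $S_{i-1}$ is either uncovered by $\mathcal{H}$ or lies in an edge already in $E_{i-1}$, i.e.
\[
3|E_{i-1}|+u(S_{i-1})=\binom{|S_{i-1}|}{2}.
\]
Under the typicality bound $u(S_{i-1})\le(1/5-\varepsilon/2)\binom{|S_{i-1}|}{2}$ and the invariant $|S_{i-1}|\le|E_{i-1}|+3$, this forces the polynomial inequality $3|E_{i-1}|\ge(4/5+\varepsilon/2)\binom{|E_{i-1}|+3}{2}$, which strictly fails for every $|E_{i-1}|\ge 2$. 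This is precisely where the $4/5$-threshold enters the argument: the inequality becomes tight exactly at density $4/5$, and any density strictly above it produces the needed slack.

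The principal obstacle is preserving typicality through the $k-2$ extension steps, since an external step increases $u$ by $|S_{i-1}|-d_{G}(v,S_{i-1})$, where $G$ denotes the shadow graph of $\mathcal{H}$. I would handle this by always choosing $v$ with the largest shadow-degree into $S_{i-1}$ among those arising from an available hyperedge: a double-counting of the $G$-edges between $S_{i-1}$ and $V\setminus S_{i-1}$, using $|E(G)|\ge(4/5+\varepsilon)\binom{n}{2}$ and $|S_{i-1}|=O(k)$, yields an average shadow-degree of at least $(4/5+\varepsilon)|S_{i-1}|-o(1)$; choosing any vertex at least this good contributes at most $(1/5-\varepsilon)|S_{i-1}|+o(1)$ new uncovered pairs, an amount that telescopes over the $O(k)$ steps while keeping $u(S_{i})/\binom{|S_{i}|}{2}$ below $1/5-\varepsilon/2$ provided $n$ is large. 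I expect the technically subtlest point to be coupling the choice of high-degree $v$ with the actual existence of a hyperedge $\{x,y,v\}\in\mathcal{H}$ (rather than only shadow-edges $vx,vy\in G$); I anticipate handling this by a joint averaging over pairs $\{x,y\}\subset S_{i-1}$ and outside vertices $v$, exploiting linearity to control the pair--edge incidences, and I would also need to deal separately with the boundary cases where $\mathcal{H}[S_{i-1}]$ fills into a sub-Steiner-like pattern (in which case the number of internal edges already exceeds $k$ and the desired configuration is found among them).
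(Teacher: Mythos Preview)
Your approach is entirely different from the paper's, which does not grow a configuration greedily at all. Instead the paper analyses the \emph{bow-tie graph} $B_{\mathcal{H}}$ (whose vertices are pairs of intersecting edges of $\mathcal{H}$ and whose adjacencies encode $(6,3)$-configurations), proves via a Goodman-type count that $\davg(B_{\mathcal{H}})\ge 6+\eps$ whenever $\dlin(\mathcal{H})\ge 4/5+\eps$, and combines this with the structural fact $\Delta(B_{\mathcal{H}})\le 8$ to deduce that either $B_{\mathcal{H}}$ has a huge component or it has $\Omega(n^{3})$ small components of average degree at least $6$; black-box lemmas from \cite{ST21} then produce a $(k+3,k)$-configuration in either case. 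The threshold $4/5$ arises from the estimate $\davg(B_{\mathcal{H}})\ge 16-8/\delta-o(1)$, which exceeds $6$ precisely when $\delta>4/5$.

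Your scheme has a genuine gap at exactly the step you flag as subtlest. To preserve typicality after an external step you need the new vertex $v$ to satisfy $d_{G}(v,S_{i-1})\ge(4/5+\eps')|S_{i-1}|$, but $v$ is constrained to be the third vertex of a hyperedge $\{x,y,v\}$ with $x,y\in S_{i-1}$, so there are at most $\binom{|S_{i-1}|}{2}=O_{k}(1)$ candidates. Your averaging bounds the mean of $d_{G}(\cdot,S_{i-1})$ over \emph{all} of $V\setminus S_{i-1}$ and says nothing about this fixed bounded set; nothing prevents every valid $v$ from having $d_{G}(v,S_{i-1})=2$ (coming only from the edge $\{x,y,v\}$ itself), and this is perfectly consistent with global density $4/5+\eps$ since it constrains only $O_{k}(1)$ shadow-pairs. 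The proposed ``joint averaging over pairs and outside vertices'' does not help: the quantity being averaged is a sum of $O_{k}(1)$ bounded terms, uncontrolled by any global hypothesis on $\mathcal{H}$. There is also a smaller slip in the blocked-case analysis: from $3m\ge(4/5+\eps/2)\binom{s}{2}$ together with $s\le m+3$ you cannot infer $3m\ge(4/5+\eps/2)\binom{m+3}{2}$, as the monotonicity points the wrong way (the case $s<m+3$, reached after internal steps, needs separate treatment). Incidentally, $\max_{m\ge 2}3m/\binom{m+3}{2}=3/5$, attained at $m\in\{2,3\}$, so your blocked inequality already fails for every density above $3/5$; it is not ``tight at $4/5$'' as you assert, and the role of $4/5$ in your outline is not where you place it.
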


Our proof combines an analysis of the bow-tie graph from the works of Shapira and the second author~\cite{ST21}, and Keevash and Long~\cite{KL20}, with a Goodman-type~\cite{Goodman} inequality between subgraph counts.  

\section{Preliminaries}
\label{sec:pre}\label{sec:bowtie}

Given a graph $G=(V,E)$, we use $|G|$ and $e(G)$ to denote $|V|$ and $|E|$, respectively. 
The \defi{average degree} of $G$ is $\davg(G)=2e(G)/|G|$. We also write $e(\mathcal{H})$ for the number of edges of a $3$-graph $\mathcal{H}$.

We denote by $\kappa_{\triangle}(G)$ and $\kappa_{\cherry}(G)$ the number of
triangles and `cherries' in $G$, respectively~---~a cherry is a (subgraph) copy of
the $3$-vertex path. We use the following well-known inequality between these two quantities (see Chapter VI.1 in~\cite{B78-extremal}). 
For completeness, we include its short proof below.

\begin{lemma}
\label{lemma:triangles}
  If $G$ is a graph on $n$ vertices,
  then
  \[
    3\kappa_{\triangle}(G) \geq 2\kappa_{\cherry}(G) - e(G)(n-2).
  \]
\end{lemma}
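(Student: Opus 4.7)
The plan is to classify each 3-element vertex subset by the number of edges of $G$ it spans. Writing $T_i$ for the number of such triples spanning exactly $i$ edges ($i \in \{0,1,2,3\}$), all three terms in the inequality should admit clean expressions in terms of $T_1, T_2, T_3$, after which the inequality will reduce to a manifestly nonnegative quantity.

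First, I would observe that a triangle is exactly a triple with all three edges present, so $\kappa_{\triangle}(G) = T_3$. Next, a cherry is a path of length $2$, and a triple spanning $2$ edges is itself a cherry (with a unique center), while a triple spanning $3$ edges contains exactly $3$ cherries (one centered at each vertex); hence $\kappa_{\cherry}(G) = T_2 + 3T_3$. Finally, double counting incidences between edges of $G$ and triples that contain them gives $e(G)(n-2) = T_1 + 2T_2 + 3T_3$, since each edge lies in $n-2$ triples and each triple with exactly $i$ edges contributes $i$ such incidences.

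Substituting these three identities into the right-hand side of the claimed inequality, I expect
\[
3\kappa_{\triangle}(G) - 2\kappa_{\cherry}(G) + e(G)(n-2) = 3T_3 - 2(T_2+3T_3) + (T_1+2T_2+3T_3) = T_1,
\]
so the inequality becomes $T_1 \geq 0$, which is trivial. There is essentially no obstacle here; the only thing to verify carefully is the bookkeeping for $\kappa_{\cherry}$, namely that each cherry has a unique center vertex, so cherries inside triangles contribute exactly $3$ each rather than being double-counted. After that, the lemma follows by one line of algebra.
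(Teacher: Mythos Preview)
Your proposal is correct and essentially identical to the paper's proof: both classify $3$-vertex subsets by the number of edges they span, express $\kappa_{\triangle}$, $\kappa_{\cherry}$, and $e(G)(n-2)$ in terms of these counts, and observe that the resulting difference equals $T_1 \geq 0$. The only distinction is notation (the paper writes $p_1,p_2$ for your $T_1,T_2$).
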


\begin{proof}
  Let $p_1$ and $p_2$ denote the number of induced subgraphs of $G$ with $3$
  vertices and exactly $1$ and $2$ edges, respectively. Since
  every edge of $G$ appears in exactly $n-2$ induced $3$-vertex subgraphs of $G$, double counting yields 
  \begin{equation*}
    e(G)(n-2) = 3\kappa_{\triangle}(G) + 2p_2 + p_1.
  \end{equation*}
  On the other hand, counting the cherries in induced $3$-vertex subgraphs of $G$ gives
  \begin{equation*}
    \kappa_{\cherry}(G) = 3\kappa_{\triangle}(G) + p_2,
  \end{equation*}
In combination, we obtain
$$2\kappa_{\cherry}(G) - e(G)(n-2)=6\kappa_{\triangle}(G)+2p_2-3\kappa_{\triangle}(G)-2p_2-p_1=3\kappa_{\triangle}(G)-p_1\leq 3\kappa_{\triangle}(G).
$$
\end{proof}
Next, we recall the definition and properties of the bow-tie
graph of a linear $3$-graph.
This notion was introduced in~\cite{ST21} in the context of a Ramsey version of the BESC, and used subsequently by Keevash and Long~\cite{KL20} to study the BESC in hypergraphs of high uniformity.  

Let $\mathcal{H} = (V,E)$ be a linear $3$-graph.
The \defi{bow-tie graph} $B_{\mathcal{H}}$ of $\mathcal{H}$ is defined as
follows. The vertices of $B_{\mathcal{H}}$
are all unordered pairs of edges $\{e,f\}$ in $E$ such that $|e \cap f| = 1$.
The edge set of~$B_{\mathcal{H}}$ is defined as
\[
  E(B_{\mathcal{H}}) = \{ \{e,f\},\{f,g\} : |e\cap f| = |f \cap g| = |e\cap g| = 1, |e\cap f\cap g| = 0\}.
\]
\noindent
The~\defi{underlying graph}~$U_{\mathcal{H}}$ of $\mathcal{H}$ is defined to have the same
vertex set~$V(U_{\mathcal{H}}) = V$, and to have an edge between vertices~$u$
and~$v$ if and only if~${\{u,v\} \subseteq e}$ for some~${e \in E}$.
We will omit the subscripts and write $B$ and $U$ when $\mathcal{H}$ is clear from the context. 

There is a direct connection between the order and size of $B$, and the subgraph counts in $U$.

  \begin{lemma} \label{obs:upper_vtx_b}
$4|B_\mathcal{H}| \leq \kappa_{\cherry}(U_\mathcal{H})$.
  \end{lemma}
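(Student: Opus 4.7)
The plan is to exhibit, for each vertex $\{e,f\}$ of $B_\mathcal{H}$, four distinct cherries in $U_\mathcal{H}$, and then to verify that the resulting global assignment hits each cherry at most once.

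Write $e \cap f = \{a\}$, $e \setminus f = \{b_1, b_2\}$ and $f \setminus e = \{d_1, d_2\}$. For every $i, j \in \{1,2\}$ the edges $\{a, b_i\}$ and $\{a, d_j\}$ both lie in $U_\mathcal{H}$ (witnessed by $e$ and $f$ respectively), so the path $b_i\, a\, d_j$ is a cherry of $U_\mathcal{H}$ centered at $a$. Since the four vertices $b_1, b_2, d_1, d_2$ are pairwise distinct and distinct from $a$, these four cherries have pairwise distinct leaf sets and are therefore pairwise distinct; I assign them to $\{e,f\}$.

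Summing over all bow-tie vertices accounts for $4|B_\mathcal{H}|$ cherries counted with multiplicity, so it remains to check injectivity of the assignment. Given a cherry $x\, a\, y$ (with center $a$, the unique vertex of degree $2$ in the path) assigned to some pair $\{e,f\}$, linearity of $\mathcal{H}$ forces $e$ to be the unique hyperedge containing $\{a,x\}$ and $f$ the unique hyperedge containing $\{a,y\}$ (possibly after swapping the roles of $e$ and $f$). Hence $\{e,f\}$ is recovered from the cherry, which yields $4|B_\mathcal{H}| \leq \kappa_{\cherry}(U_\mathcal{H})$.

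The whole argument rests on the linearity of $\mathcal{H}$, used both to ensure distinctness of the four cherries produced by a single bow-tie vertex and to invert the assignment in the injectivity step. I do not foresee a substantial obstacle beyond keeping track of these elementary points.
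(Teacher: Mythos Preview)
Your argument is correct and follows essentially the same approach as the paper: you build the same four cherries $b_i\,a\,d_j$ from each bow-tie vertex and use linearity to recover $\{e,f\}$ from any such cherry. The paper phrases this as an equality $4|B|=|C|$ with the set $C$ of cherries whose vertex set is not a hyperedge, but your injective version is the same combinatorics and is all that is needed for the stated inequality.
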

  \begin{proof}
Let $C$ be the set of all cherries in~$U$ whose vertex set does not coincide with an edge of~$\mathcal{H}$. Since~$\mathcal{H}$ is
    linear, the vertex set of each cherry in $C$ is a subset of the vertex set of a unique pair of intersecting edges of~$\mathcal{H}$.
    Conversely, each pair of intersecting edges of $\mathcal{H}$ gives rise to exactly $4$ cherries in $C$. Therefore,
    \[
      4|B| = |C| \leq \kappa_{\cherry}(U).
    \]
  \end{proof}
\noindent 
Observing that edges of $B$ are closely related to $(6,3)$-configurations in $U$ gives the following. 
\begin{lemma}[\cite{ST21}, Proposition 2.1, Remark 2.2]\label{lemma:eB}
We have \begin{enumerate}
\item[(i)]	$\Delta(B_{\mathcal{H}}) \leq 8$, 
\item[(ii)] $e(B_\mathcal{H}) = 3\kappa_{\triangle}(U_\mathcal{H}) - 3e(\mathcal{H})$.
	\end{enumerate}
\end{lemma}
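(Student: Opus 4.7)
The plan is to handle (i) and (ii) separately, both via straightforward counting arguments exploiting linearity.

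For part (i), I would fix a vertex $v=\{e,f\}$ of $B_\mathcal{H}$ and try to bound the number of its neighbors. Let $x$ be the unique vertex in $e\cap f$, and write $e=\{x,a_1,a_2\}$, $f=\{x,b_1,b_2\}$. By the definition of the edge set, every neighbor of $v$ must share one of its two coordinates with $v$, so it has the form $\{f,g\}$ or $\{e,g\}$ for some edge $g\in\mathcal{H}$. Consider the case $\{f,g\}$: the defining conditions $|e\cap g|=|f\cap g|=1$ and $|e\cap f\cap g|=0$ force $g$ to contain exactly one vertex from $\{a_1,a_2\}$ and exactly one vertex from $\{b_1,b_2\}$ (it cannot contain $x$). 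Since $\mathcal{H}$ is linear, any two vertices lie in at most one edge, so each of the $2\cdot 2=4$ choices of such a pair determines at most one $g$. Thus $\{e,f\}$ has at most $4$ neighbors of the form $\{f,g\}$, and symmetrically at most $4$ of the form $\{e,g\}$, giving $\Delta(B_\mathcal{H})\leq 8$.

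For part (ii), I would set up a bijection between edges of $B_\mathcal{H}$ (counted with multiplicity $3$) and ``non-trivial'' triangles of $U_\mathcal{H}$. An edge $\{\{e,f\},\{f,g\}\}$ of $B_\mathcal{H}$ records an unordered triple $\{e,f,g\}$ of edges of $\mathcal{H}$ that pairwise intersect in three distinct vertices (distinct because $e\cap f\cap g=\emptyset$). For any such ``bow-tie'' triple, all three vertex-pairs $\{e,f\},\{f,g\},\{e,g\}$ lie in $B_\mathcal{H}$ and form a triangle there, so each bow-tie contributes exactly $3$ edges to $B_\mathcal{H}$; and conversely each edge of $B_\mathcal{H}$ is obtained this way from exactly one bow-tie. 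Hence $e(B_\mathcal{H})=3N$, where $N$ denotes the number of bow-tie triples in $\mathcal{H}$.

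It remains to count bow-ties in terms of $\kappa_\triangle(U_\mathcal{H})$ and $e(\mathcal{H})$. Let $\{x,y,z\}$ be a triangle of $U_\mathcal{H}$, and let $e_{xy},e_{yz},e_{xz}\in\mathcal{H}$ be the (by linearity unique) edges of $\mathcal{H}$ witnessing the three edges of that triangle. Either these three witnesses coincide, in which case $\{x,y,z\}$ is itself an edge of $\mathcal{H}$; or they are three distinct edges forming a bow-tie with intersection vertices $x,y,z$. Conversely, a bow-tie $\{e,f,g\}$ with pairwise intersections $x=e\cap f$, $y=f\cap g$, $z=e\cap g$ produces the triangle $\{x,y,z\}$ in $U_\mathcal{H}$, and distinct bow-ties produce distinct triangles (the triangle recovers the three edges via linearity). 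Therefore $\kappa_\triangle(U_\mathcal{H})=e(\mathcal{H})+N$, and substituting gives $e(B_\mathcal{H})=3\kappa_\triangle(U_\mathcal{H})-3e(\mathcal{H})$, as claimed.

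The only subtle point is the use of linearity in both parts — to bound the number of choices for $g$ in (i) and to uniquely reconstruct the edges of a bow-tie from a triangle of $U_\mathcal{H}$ in (ii); apart from this, the argument is purely combinatorial bookkeeping.
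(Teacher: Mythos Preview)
Your proof is correct. The paper does not give its own proof of this lemma --- it simply cites Proposition~2.1 and Remark~2.2 of \cite{ST21} --- so there is nothing to compare against here; your argument is the natural one and matches what the cited reference does.
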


\noindent
Combining the above lemmas, we can bound the average degree of the bow-tie graph as follows.

\begin{lemma}
\label{lemma:avg_degree}
  Let $0 < \delta \leq 1$ and let $\mathcal{H}$ be a linear $3$-graph on $n$ vertices
  with $\dlin(\mathcal{H}) =d\geq \delta$. Then
  \[
    \davg(B_{\mathcal{H}}) \geq 16 - \frac{8n}{\delta(n-1)-1}.
  \]
 In particular, if $\delta=4/5+\eps$ for some $\eps>0$ and $n\geq n_3(\eps)$, then 
 $$\davg(B_{\mathcal{H}})\geq 6+\eps.$$ 
\end{lemma}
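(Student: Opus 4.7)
The plan is to feed the three preceding lemmas into the definition $\davg(B_\mathcal{H}) = 2e(B_\mathcal{H})/|B_\mathcal{H}|$, express everything in terms of $\kappa_{\cherry}(U_\mathcal{H})$, and then bound the latter below by convexity. Write $U = U_\mathcal{H}$ and $d = \dlin(\mathcal{H})$. Since $\mathcal{H}$ is linear, each edge of $\mathcal{H}$ contributes three pairwise distinct edges to $U$, so $e(U) = 3e(\mathcal{H})$, and unpacking the definition of $\dlin$ gives $e(\mathcal{H}) = dn(n-1)/6$.

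First I will chain Lemma \ref{lemma:eB}(ii) with Lemma \ref{lemma:triangles} applied to $U$:
$$e(B_\mathcal{H}) \;=\; 3\kappa_{\triangle}(U) - 3e(\mathcal{H}) \;\geq\; 2\kappa_{\cherry}(U) - e(U)(n-2) - 3e(\mathcal{H}) \;=\; 2\kappa_{\cherry}(U) - 3e(\mathcal{H})(n-1),$$
where the last equality uses $e(U) = 3e(\mathcal{H})$. Combining with $|B_\mathcal{H}| \leq \kappa_{\cherry}(U)/4$ from Lemma \ref{obs:upper_vtx_b} yields
$$\davg(B_\mathcal{H}) \;\geq\; \frac{8\bigl(2\kappa_{\cherry}(U) - 3e(\mathcal{H})(n-1)\bigr)}{\kappa_{\cherry}(U)} \;=\; 16 - \frac{24\,e(\mathcal{H})(n-1)}{\kappa_{\cherry}(U)}.$$

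Next I will lower-bound $\kappa_{\cherry}(U)$ by Jensen's inequality. Counting each cherry through its middle vertex gives $\kappa_{\cherry}(U) = \sum_{v} \binom{\deg_U(v)}{2}$, and the convexity of $x \mapsto \binom{x}{2}$ yields $\kappa_{\cherry}(U) \geq n\binom{\davg(U)}{2}$. Since $\davg(U) = 2e(U)/n = 6e(\mathcal{H})/n = d(n-1)$, substituting $e(\mathcal{H}) = dn(n-1)/6$ and simplifying collapses the error term to $8(n-1)/(d(n-1)-1)$, which is at most $8n/(\delta(n-1)-1)$ using $d \geq \delta$ and $n-1 \leq n$. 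This establishes the main inequality.

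For the ``in particular'' clause, with $\delta = 4/5 + \eps$ one has
$$\lim_{n \to \infty} \frac{8n}{\delta(n-1)-1} \;=\; \frac{8}{\delta} \;=\; \frac{40}{4 + 5\eps} \;<\; 10,$$
so taking $n_3(\eps)$ large enough makes this quantity at most $10 - \eps$, giving $\davg(B_\mathcal{H}) \geq 6 + \eps$. I do not foresee a genuine obstacle here: every step is either a direct invocation of a stated lemma, a convexity estimate, or routine algebra. The only thing requiring care is to remember that linearity of $\mathcal{H}$ is used precisely to replace $e(U)$ by $3e(\mathcal{H})$, which is what turns the $(n-2)$ factor from Lemma \ref{lemma:triangles} into the clean $(n-1)$ multiplier on $e(\mathcal{H})$.
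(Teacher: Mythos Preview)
Your proof is correct and follows essentially the same route as the paper's: both chain Lemma~\ref{lemma:eB}(ii) with Lemma~\ref{lemma:triangles}, divide by the upper bound on $|B_\mathcal{H}|$ from Lemma~\ref{obs:upper_vtx_b}, and then invoke Jensen to bound $\kappa_{\cherry}(U)$ from below. The only cosmetic differences are that the paper relaxes $(n-1)$ to $n$ one step earlier than you do, and it verifies the ``in particular'' clause by an explicit choice of $n_3$ rather than your limit argument.
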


\begin{proof}
  By Lemma~\ref{lemma:eB}(ii) and Lemma~\ref{lemma:triangles}, we have
  \begin{align*}
    e(B)&=3\kappa_{\triangle}(U) - 3e(\mathcal{H}) \\
    &\geq 2\kappa_{\cherry}(U)-e(U)(n-2) - 3e(\mathcal{H})\\
    &= 2\kappa_{\cherry}(U) - 3e(\mathcal{H})(n-2) - 3e(\mathcal{H})\\
    &\geq 2\kappa_{\cherry}(U) - 3e(\mathcal{H})n.
  \end{align*}
  \noindent
By this and Lemma~\ref{obs:upper_vtx_b}, we obtain
  \[
    \davg(B) = \frac{2e(B)}{|B|} \geq \frac{4\kappa_{\cherry}(U)-6e(\mathcal{H})n}{\kappa_{\cherry}(U)/4} = 16 - 24\frac{e(\mathcal{H})n}{\kappa_{\cherry}(U)} = 16 - 4\frac{d n^2(n-1)}{\kappa_{\cherry}(U)}. 
  \]
  Note that, by Jensen's inequality, 
  \[
    \kappa_{\cherry}(U) = \sum_{v \in V(U)}\binom{\deg_U(v)}{2} \geq n \binom{\frac{1}{n}\sum_{v \in V(U)}\deg_U(v)}{2} = n\binom{\frac{2e(U)}{n}}{2} = n\binom{\frac{6e(\mathcal{H})}{n}}{2}.
  \]
\noindent
  Since $6e(\mathcal{H}) = 2d\binom{n}{2} = \delta n(n-1)$, it follows that
  \begin{align*}
    \kappa_{\cherry}(U) \geq  n\binom{\frac{6e(\mathcal{H})}{n}}{2}
                        = n\binom{d(n-1)}{2}
                        = \frac{d^2n(n-1)^2}{2} - \frac{d n(n-1)}{2}.
  \end{align*}
  Therefore,
  \begin{align*}
    \davg(B) &\geq 16 - 4\frac{d n^2(n-1)}{\kappa_{\cherry}(U)} \geq 16 - \frac{8d n^2(n-1)}{d^2n(n-1)^2-d n(n-1)} = 16 - \frac{8n}{d(n-1)-1}\\
    &\geq 16 - \frac{8n}{\delta(n-1)-1}.
  \end{align*}
  For the second assertion, substitute $\delta=4/5+\eps$, and suppose that 
    \[
  \frac{n_3-1}{n_3} \geq 1 - \frac{\eps}{4+5\eps}\;\;\text{ and }\;\;\frac{5}{4n_3} \leq \frac{\eps}{2}.
  \]
  We obtain
    \begin{align*}\label{eq:l_avgdegree}
  	\davg(B) &\geq 16 - \frac{8n}{(4/5+\eps)(n-1)-1}\\
    &= 16 - \frac{10}{(1+5\eps/4)(n-1)/n-5/(4n)}\\
  	&\geq 16 - \frac{10}{(1+5\eps/4)(1-\eps/(4+5\eps)) - \eps/2}\\
  	&= 16 - \frac{10}{((4+5\eps)/4)((4+4\eps)/(4+5\eps)) - \eps/2}\\
  	&= 16 - \frac{10}{1+\eps - \eps/2}\\
  	&= \frac{6 + 8\eps}{1+\eps/2}\\
  	&\geq 6 + \eps.
  \end{align*}
\end{proof}
\noindent
We shall also need the following lower bound on $|B|$. A similar bound was used in~\cite{KL20}.

\begin{lemma}
	\label{lemma:bowties}
	For every $0 < \delta \leq 1$ there exists~$n_1 = n_1(\delta)$ such
	that the following holds for all~${n \geq n_1}$.
	If~$\mathcal{H}$ is
	a linear $3$-graph on~$n$
	vertices with~${\dlin(\mathcal{H}) \geq \delta}$, then
	\[
	|B_\mathcal{H}| \geq \frac{\delta^2}{16}\,n^3.
	\]
\end{lemma}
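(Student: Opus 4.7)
The plan is to prove the exact identity
\[
|B_\mathcal{H}| = \sum_{v \in V(\mathcal{H})} \binom{d_\mathcal{H}(v)}{2}
\]
and then apply Jensen's inequality to the degree sequence of $\mathcal{H}$.

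The identity is immediate from linearity. Any two distinct edges of $\mathcal{H}$ share at most one vertex, so a pair of distinct edges $\{e,f\}$ satisfies $|e \cap f|=1$ precisely when $e$ and $f$ share a common vertex, and that vertex is uniquely determined. Grouping vertices of $B_\mathcal{H}$ by their unique common vertex $v$ then gives the displayed identity, where $d_\mathcal{H}(v)$ denotes the number of edges of $\mathcal{H}$ through $v$.

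The handshake identity $\sum_v d_\mathcal{H}(v) = 3e(\mathcal{H}) = \delta n(n-1)/2$ shows that the average degree in $\mathcal{H}$ equals $\delta(n-1)/2$. Since $x \mapsto \binom{x}{2}$ is convex, Jensen's inequality yields
\[
|B_\mathcal{H}| \geq n \binom{\delta(n-1)/2}{2} = \frac{\delta^2 n(n-1)^2}{8} - \frac{\delta n(n-1)}{4},
\]
whose leading term is $\delta^2 n^3/8$. Since this is twice the desired $\delta^2 n^3/16$, it suffices to choose $n_1(\delta)$ large enough so that the quadratic error term $\delta n(n-1)/4$ together with the gap between $(n-1)^2/8$ and $n^2/16$ is absorbed; this is a routine inequality equivalent to requiring $\delta(n^2-4n+2) \geq 4(n-1)$, which holds once $n \geq C/\delta$ for a suitable absolute constant $C$.

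I do not anticipate any real obstacle. The identity is a direct consequence of linearity, and the comfortable factor-of-two gap between the Jensen bound and the target leaves ample room for the lower-order corrections. The looseness of the constant $\delta^2/16$ in the lemma statement reflects exactly this slack and allows the proof to be entirely elementary.
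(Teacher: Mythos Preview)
Your proof is correct and follows essentially the same approach as the paper: both establish the identity $|B_\mathcal{H}|=\sum_v \binom{d_\mathcal{H}(v)}{2}$, apply Jensen's inequality to the degree sequence, and absorb the lower-order terms by taking $n_1$ of order $1/\delta$ (the paper sets $n_1=12/\delta$). One cosmetic point: since the hypothesis is $\dlin(\mathcal{H})\geq\delta$, the average $\mathcal{H}$-degree is at least $\delta(n-1)/2$ rather than equal to it, but as $x\mapsto\binom{x}{2}$ is increasing on the relevant range this only helps.
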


\begin{proof}
	Put $n_1 = 12\delta^{-1}$. Suppose $\mathcal{H}$ is a linear $3$-graph on $n\geq n_1$ vertices
	with~$\dlin(\mathcal{H}) \geq \delta$.
	We count the pairs of intersecting edges in~$\mathcal{H}$.
	By Jensen's inequality and the choice of $n_1$ we have
	\[
    |B| = \sum_{v \in V(\mathcal{H})}\binom{\deg_{\mathcal{H}}(v)}{2} \geq n\binom{\frac{1}{n}\sum_{v \in V(\mathcal{H})}\deg_{\mathcal{H}}(v)}{2} = n\binom{\frac{3e(\mathcal{H})}{n}}{2} \geq n\binom{\frac{\delta\binom{n}{2}}{n}}{2} = n\binom{\frac{\delta(n-1)}{2}}{2} \geq \frac{\delta^2 n^3}{16}.
  \]
\end{proof}

When the bow-tie graph of a linear $3$-graph $\mathcal{H}$ has a
large (connected) component, a 
$(k+3,k)$-configuration in $\mathcal{H}$ can be constructed inductively by following a long path inside the component.
\begin{prop}[\cite{ST21}, Lemma 2.3]
  \label{lemma:large_config}
  Let $k \geq 3$, and let $\mathcal{H}$ be a linear $3$-graph
  on $n$ vertices. If $B_\mathcal{H}$ has a component
  with at least~$3^{10k^2}$ vertices, then $\mathcal{H}$ has
  a~$(k+3,k)$-configuration.
\end{prop}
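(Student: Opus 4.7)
The plan is to find a long simple path in the large component of $B_\mathcal{H}$, and then to extract from it a sub-path of length $k-2$ that uses $k$ distinct hyperedges of $\mathcal{H}$; the bow-tie structure will then force these $k$ hyperedges to span at most $k+3$ vertices, giving the desired configuration.

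By Lemma~\ref{lemma:eB}(i) we have $\Delta(B_\mathcal{H}) \leq 8$, so any BFS tree in $B_\mathcal{H}$ has at most $8^d$ vertices at depth $d$. Therefore a component of size at least $3^{10k^2}$ contains a simple path $v_0 v_1 \dots v_D$ of length $D \geq 10 k^2 \log_8 3 > 5k^2$.

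The key structural observation is the following: if a simple $B_\mathcal{H}$-path $v_0 v_1 \dots v_{k-2}$ uses a total of exactly $k$ distinct hyperedges, i.e.\ $|\bigcup_i v_i| = k$, then these hyperedges give a $(k+3,k)$-configuration in $\mathcal{H}$. I would argue this by induction on the path length. The starting vertex $v_0$ is a pair of intersecting hyperedges, spanning $5$ vertices of $\mathcal{H}$. Passing from $v_i$ to $v_{i+1}$ introduces at most one new hyperedge $e$; by the definition of $E(B_\mathcal{H})$, the three hyperedges in $v_i \cup v_{i+1}$ form a bow-tie, so $e$ meets both hyperedges of $v_i$ in two distinct vertices. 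Hence $e$ contributes at most $1$ new vertex to the running span, and telescoping gives a total span of at most $5 + (k-2) = k+3$.

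The heart of the proof, and the main obstacle, is therefore to find a sub-path of length $k-2$ that actually introduces a new hyperedge at each step. A simple $B$-path may a priori reuse hyperedges freely, as long as the $B$-vertices (intersecting pairs) themselves remain distinct; already the triangle in $B_\mathcal{H}$ coming from a single bow-tie in $\mathcal{H}$ uses only three hyperedges. However, since $\Delta(B_\mathcal{H}) \leq 8$, any fixed hyperedge $e$ can appear only boundedly many times as one of the two elements of a $B$-vertex on a given simple path, because each appearance pairs $e$ with a distinct partner hyperedge. A counting / pigeonhole argument — bounding the number of length-$(k-2)$ sub-paths spoiled by a short-range repetition against the total $\gtrsim 5k^2$ sub-paths available — should then exploit the generous slack between the path length $5k^2$ and the target length $k-2$ to produce a clean sub-path, completing the proof.
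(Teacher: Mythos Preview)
The paper does not prove this proposition; it is quoted from \cite{ST21}, and the only hint given is that the configuration ``can be constructed inductively by following a long path inside the component.'' Your outline follows exactly this route, and the first two ingredients --- extracting a simple path of length $\gtrsim 5k^2$ from the bounded-degree component, and the inductive span estimate showing that each newly introduced hyperedge contributes at most one new vertex --- are correct and are the intended ones.

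The last paragraph, however, is both misargued and unnecessarily roundabout. The appeal to $\Delta(B)\le 8$ in order to bound how often a fixed hyperedge $e$ occurs along the path is wrong: $\Delta(B)\le 8$ bounds the $B$-degree of a \emph{pair}, not the number of $B$-vertices containing $e$, which can be linear in $n$. The correct observation is the one you state but then do not exploit: on a simple path the $B$-vertices are distinct pairs, so if the path uses $h$ hyperedges in total then its $D+1$ vertices satisfy $D+1\le\binom{h}{2}$, whence $h>\sqrt{2D}>k$. More importantly, there is no need to hunt for a ``clean'' sub-path at all. Your own induction, applied also to steps that introduce no new hyperedge (and therefore no new vertex), yields $|S_i|\le|H_i|+3$ at every stage, where $H_i$ and $S_i$ denote the hyperedges and vertices collected up to $v_i$. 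Since $|H_i|$ starts at $2$, increases by $0$ or $1$ per step, and eventually exceeds $k$ by the pair-counting bound above, some $i^*$ has $|H_{i^*}|=k$, and then $|S_{i^*}|\le k+3$ gives the configuration directly.
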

We say that a component $C$ of~$B$ is \defi{dense} if~${\davg(C) \geq 6}$. If the bow-tie graph has many dense components
of bounded size, we are also able to find a~$(k+3,k)$-configuration. The strategy used in this case is, roughly speaking, to find small configurations with many edges in each component in such a way that together they form a~$(k+3,k)$-configuration.

\begin{prop}[\cite{ST21}, Lemmas 3.6 and 3.7]
  \label{lemma:dense_config}
  For every $k \geq 3$ and $\beta > 0$ there exists~${n_2 = n_2(k,\beta)}$
  such that the following holds for all~${n \geq n_2}$.
  If $\mathcal{H}$ is a linear $3$-graph on~$n$ vertices
  such that $B_\mathcal{H}$ has $\beta n^{3}$ dense components,
  each with at most $3^{10k^{2}}$ vertices,
  then~$\mathcal{H}$ contains a~$(k+3,k)$-configuration.
\end{prop}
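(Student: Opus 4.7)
The plan is to extract from each dense component a bounded-size ``efficient'' sub-configuration of $\mathcal{H}$, apply pigeonhole to isolate many copies of one fixed type, and then greedily combine them into a $(k+3,k)$-configuration.

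\emph{Step 1: efficient piece from each dense component.} Since $\davg(C)\geq 6$ and $\Delta(C)\leq 8$ (the latter by Lemma~\ref{lemma:eB}(i)), Jensen's inequality gives $\kappa_\cherry(C)\geq 15\,v(C)$, so every dense $C$ contains many cherries in $B_\mathcal{H}$. A short case analysis of how the three $B$-vertices of a cherry share their $\mathcal{H}$-edges shows that each cherry translates to a sub-hypergraph of $\mathcal{H}$ with $4$ pairwise-intersecting $\mathcal{H}$-edges on at most $7$ $\mathcal{H}$-vertices, i.e., either a $(6,4)$- or a $(7,4)$-configuration. Since either of these is already a $(k+3,k)$-configuration for $k=4$, the proposition is immediate in that case; we henceforth assume $k\geq 5$ and treat these pieces as seeds.

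\emph{Step 2: pigeonholing on isomorphism type.} Each dense component has at most $3^{10k^2}$ vertices, so the number of isomorphism types of (efficient piece, embedding in its component) is bounded by some $c=c(k)$. Pigeonhole over the $\beta n^3$ dense components then leaves at least $\Omega_{k,\beta}(n^3)$ components all producing efficient pieces of a single fixed type $T$.

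\emph{Step 3: greedy assembly.} Taking one copy of $T$ as the seed $H_0$, I would iteratively append further type-$T$ pieces, each drawn from an unused dense component, so that every new piece shares the right number of $\mathcal{H}$-vertices with $H_i$ to keep the running excess $|V(H_{i+1})|-e(H_{i+1})\leq 3$, while preserving linearity of the accumulated sub-hypergraph. Since $|V(H_i)|=O(k)$ throughout, there are only $O_k(1)$ possible attachment patterns; a second pigeonhole applied to the $\Omega_{k,\beta}(n^3)$ available pieces then yields $\Omega_{k,\beta}(n^3)/O_k(1)$ candidates per pattern, more than enough to continue. After $O(k)$ iterations, $H_i$ becomes a $(k+3,k)$-configuration.

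\emph{Main obstacle.} The delicate part is the assembly step, in which one must simultaneously control linearity, the exact vertex-to-edge excess, and the supply of compatible pieces. A naive strategy fixing the overlap vertices in advance fails, because any specific triple of vertices lies in only $O(1)$ type-$T$ pieces; instead one must show that at each stage \emph{some} pair (for $(6,4)$-pieces) or triple (for $(6,3)$-pieces) inside $V(H_i)$ is contained in many available pieces, yielding a valid extension via averaging. The bound $\beta n^{3}$ is essential here: each pigeonhole and averaging step costs a $k$-dependent factor, and weaker hypotheses (such as $\beta n^{2}$) would not survive these losses.
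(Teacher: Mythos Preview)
The paper does not give its own proof of this proposition; it is quoted verbatim from \cite{ST21} (Lemmas~3.6 and~3.7) and used as a black box, with only the one-sentence description ``find small configurations with many edges in each component in such a way that together they form a $(k+3,k)$-configuration.'' So there is no in-paper argument to compare yours against, and I can only assess your sketch on its own terms. Your high-level plan---extract an efficient piece from each dense component, pigeonhole on isomorphism type, then glue---is indeed the shape of the \cite{ST21} argument, but two of your steps contain genuine gaps.

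\textbf{Step~1 is incorrect as stated, and too weak even when patched.} A cherry in $B$ need not involve four $\mathcal{H}$-edges: if $e,f,g$ form a $(6,3)$-configuration then $\{e,f\},\{f,g\},\{e,g\}$ is a triangle in $B$, and any two of its edges form a cherry using only the three $\mathcal{H}$-edges $e,f,g$. Even when a fourth edge $h$ appears, the four edges are not pairwise intersecting in general. More importantly, a $(7,4)$-piece has vertex--edge excess $3$. Gluing excess-$3$ pieces cannot produce a $(k+3,k)$-configuration unless the overlaps are large, and with only $\Omega(n^3)$ pieces on $O(1)$ vertices an averaging argument gives at best a common pair, not the common triple or quadruple you would need. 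This is precisely where the hypothesis $\davg(C)\geq 6$ (rather than merely $e(C)>0$) does real work: from a component with average degree at least $6$ and maximum degree at most $8$ one can extract a bounded sub-configuration of $\mathcal{H}$ with excess at most $2$, and obtaining such a piece is the content of one of the two cited lemmas. Your cherry argument does not use the full strength of the density assumption and therefore cannot deliver this.

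\textbf{Step~3 is an acknowledged gap, not an argument.} You correctly flag that one cannot fix the attachment vertices in advance, but your proposed fix (``some pair inside $V(H_i)$ is contained in many available pieces, via averaging'') is exactly the statement that needs proof, and it is false as written: once $H_i$ is fixed, its $O(k)$ vertices are specific, and nothing forces any pair among them to be popular. The way around this is to run the averaging \emph{before} building $H_0$---select a pair (or an $\mathcal{H}$-edge) lying in $\Omega(n)$ pieces of the fixed type $T$, anchor every piece at that pair, and then greedily peel off pieces that are disjoint outside the anchor. Making this work while controlling linearity and possible shared $\mathcal{H}$-edges, and handling all residues of $k$, is the content of the second cited lemma; your sketch does not supply it.
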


\section{Proof of Theorem~\ref{thm:main}}
\label{sec:thm_main}
In short, we invoke Lemma~\ref{lemma:avg_degree} and deduce that the bow-tie graph graph $B$ has large average degree. Assuming it has no large component (as then we would be done by Proposition~\ref{lemma:large_config}), we deduce that $B$ must have many dense components. The assertion of Theorem~\ref{thm:main} then follows by Proposition~\ref{lemma:dense_config}.
\begin{proof}[Proof of Theorem~\ref{thm:main}]
  Let $1/5\geq \eps > 0$ and $k \geq 4$. Define an auxiliary constant
  $\beta = \eps/3^{11k^2}$.
  Apply Lemma~\ref{lemma:bowties} with $\delta = 4/5 + \eps$
  to obtain $n_1$, apply Proposition~\ref{lemma:dense_config} with~$k$ and~$\beta$ to
  obtain~$n_2$. Choose $n_3 =n_3(\eps)$ as in Lemma~\ref{lemma:avg_degree}, and define $n_0 = \max\{n_1,n_2,n_3\}$.
  Suppose that $\mathcal{H}$ is a linear $3$-graph
  on~$n\geq n_0$ vertices with~${\dlin(\mathcal{H}) \geq 4/5 + \eps}$.  By Lemma~\ref{lemma:avg_degree}, we have $$\davg(B)\geq 6+\eps.$$

    Let $C_1,\dots,C_\ell$ be the components of $B_\mathcal{H}$. If a component has least $3^{10k^2}$ vertices, we can directly apply Proposition~\ref{lemma:large_config}. Thus, let us assume that $|C_i|<3^{10k^2}$ for all $i$.

Let $I \subseteq [\ell]$ be the set of all $i \in [\ell]$ such that $C_i$ is dense.
  We have
  \begin{align*}
    \davg(B) = \sum_{i \in [\ell]}\frac{|C_i|}{|B|}\davg(C_i)
             = \sum_{i \in I}\frac{|C_i|}{|B|}\davg(C_i) + \sum_{i \in [\ell]\setminus I}\frac{|C_i|}{|B|}\davg(C_i).
  \end{align*}
  Using Lemma~\ref{lemma:eB}(i) we see that
  \begin{align}
    \davg(B) &= \sum_{i \in I}\frac{|C_i|}{|B|}\davg(C_i) + \sum_{i \in [\ell]\setminus I}\frac{|C_i|}{|B|}\davg(C_i)\nonumber\\
             &\leq \frac{8}{|B|}\sum_{i \in I}|C_i| + \frac{6}{|B|}\sum_{i \in [\ell]\setminus I}|C_i|\nonumber\\
             &= \frac{8}{|B|}\sum_{i \in I}|C_i| + \frac{6}{|B|}\Big(|B| - \sum_{i \in I}|C_i|\Big)\nonumber\\
             &= 6 + \frac{2}{|B|}\sum_{i \in I}|C_i| \label{eq:up_avg}.
  \end{align}
  
  \noindent
 Therefore,
  \[
    6 + \eps \leq \davg(B) \leq 6 + \frac{2}{|B|}\sum_{i \in I}|C_i|,
  \]
  which implies that
  \[
    \sum_{i \in I}|C_i| \geq \frac{\eps|B|}{2}.
  \]
  Since $|C_i| < 3^{10k^2}$ for all $i \in I$, it follows that
  \[
    |I| \geq \frac{\eps |B|}{3^{10k^2+1}}.
  \]
  By Lemma~\ref{lemma:bowties} we have 
  $$|B| \geq \frac{(4/5+\eps)^{2}}{16}n^3\geq \frac{n^3}{25}.$$
  Hence,
  \begin{align*}
    |I| \geq \frac{\eps |B|}{3^{10k^2+1}} \geq \frac{\eps}{3^{10k^2 + 1}}\cdot \frac{n^3}{25} \geq \frac{\eps}{3^{11k^2}}n^3 = \beta n^3.
  \end{align*}
  Thus, the bow-tie graph $B$ has at least $\beta n^3$ dense components, each
  with at most $3^{10k^2}$ vertices. By
  Proposition~\ref{lemma:dense_config}, $\mathcal{H}$ contains a $(k+3,3)$-configuration.
\end{proof}

\section{Concluding remarks}

We have shown that large linear $3$-graphs of density above $4/5$
contain $(k+3,k)$-configurations for any fixed $k$. We hope that subsequent papers
will gradually lower this density threshold. We believe this avenue could lead to progress towards a proof, or perhaps a disproof, of the
Brown-Erd\H{o}s-S\'os conjecture.

Of particular interest are the values $1/t\colon t=2,3,\dots$, since each of
them would imply a Ramsey version of the BESC studied in~\cite{ST21}: for
every $k\geq 3$ every $t$-colouring of a sufficiently large complete triple
system (conjecturally) contains a monochromatic $(k+3,k)$-configuration.  

Applying our method to the first open case $k=4$ of the BESC yields a
linear density threshold of $4/7$. Here again it would be interesting to try to
decrease it. 

\section*{Acknowledgements}

This project has received funding from the European Union’s Horizon 2020
research and innovation programme under the Marie Skłodowska-Curie grant
agreement No. 101007705.

\begin{bibdiv}
\begin{biblist}

  \bib{BES71}{article}{
   author={Brown, W. G.},
   author={Erd\H os, P.},
   author={S\'os, V. T.},
   title={Some extremal problems on $r$-graphs},
   conference={
      title={New directions in the theory of graphs},
      address={Proc. Third Ann Arbor Conf., Univ. Michigan, Ann Arbor,
      Mich.},
      date={1971},
   },
   book={
      publisher={Academic Press, New York-London},
   },
   date={1973},
   pages={53--63},
   review={\MR{0351888}},
  }

  \bib{RS76}{article}{
     author={Ruzsa, I. Z.},
     author={Szemer\'edi, E.},
     title={Triple systems with no six points carrying three triangles},
     conference={
        title={Combinatorics},
        address={Proc. Fifth Hungarian Colloq., Keszthely},
        date={1976},
     },
     book={
        series={Colloq. Math. Soc. J\'anos Bolyai},
        volume={18},
        publisher={North-Holland, Amsterdam-New York},
     },
     isbn={0-444-85093-3},
     date={1978},
     pages={939--945},
     review={\MR{0519318}},
  }

  \bib{MM62}{article}{
   author={Moon, J. W.},
   author={Moser, L.},
   title={On a problem of Tur\'an},
   language={English, with Russian summary},
   journal={Magyar Tud. Akad. Mat. Kutat\'o{} Int. K\"ozl.},
   volume={7},
   date={1962},
   pages={283--286},
   issn={0541-9514},
   review={\MR{0151955}},
  }

  \bib{AK78}{article}{
   author={Ahlswede, R.},
   author={Katona, G. O. H.},
   title={Graphs with maximal number of adjacent pairs of edges},
   journal={Acta Math. Acad. Sci. Hungar.},
   volume={32},
   date={1978},
   number={1-2},
   pages={97--120},
   issn={0001-5954},
   review={\MR{0505076}},
   doi={10.1007/BF01902206},
  }

  \bib{S15}{article}{
   author={Solymosi, J\'ozsef},
   title={The $(7,4)$-conjecture in finite groups},
   journal={Combin. Probab. Comput.},
   volume={24},
   date={2015},
   number={4},
   pages={680--686},
   issn={0963-5483},
   review={\MR{3350029}},
   doi={10.1017/S0963548314000856},
  }

  \bib{RW18}{article}{
   author={Reiher, Christian},
   author={Wagner, Stephan},
   title={Maximum star densities},
   journal={Studia Sci. Math. Hungar.},
   volume={55},
   date={2018},
   number={2},
   pages={238--259},
   issn={0081-6906},
   review={\MR{3813354}},
   doi={10.1556/012.2018.55.2.1395},
  }

  \bib{KL20}{article}{
      author = {Keevash, Peter},
      author = {{Long}, Jason},
        title = {The Brown-Erd{\H{o}}s-S{\'o}s Conjecture for hypergraphs of large uniformity},
      journal = {arXiv e-prints},
         year = {2020},
        pages = {arXiv:2007.14824},
          doi = {10.48550/arXiv.2007.14824},
       eprint = {2007.14824},
  }

  \bib{B78-extremal}{book}{
   author={Bollobas, Bela},
   book={
   title={Extremal graph theory},
   },
   title={Extremal graph theory},
   series={London Mathematical Society Monographs},
   volume={11},
   date={1978},
   publisher={Academic Press, London},
  }

  \bib{ST21}{article}{
   author={Shapira, Asaf},
   author={Tyomkyn, Mykhaylo},
   title={A Ramsey variant of the Brown-Erd\H os-S\'os conjecture},
   journal={Bull. Lond. Math. Soc.},
   volume={53},
   date={2021},
   number={5},
   pages={1453--1469},
   issn={0024-6093},
   review={\MR{4335219}},
   doi={10.1112/blms.12510},
  }
  
  \bib{Goodman}{article}{
  	author = {Goodman, A. W.},
  	title = {On sets of acquaintances and strangers at any party},
  	journal = {Amer. Math. Monthly},
   	volume = {66},
  	date = {1959},
  	pages = {778--783},
  	issn = {0002-9890},
  }

\end{biblist}
\end{bibdiv}

\end{document}